\newcommand{\inv}[1]{\frac{1}{#1}}
\newcommand{\N}{\mathbb{N}}
\newcommand{\R}{\mathbb{R}}
\newcommand{\C}{\mathcal{C}}
\newcommand{\suml}[2]{\sum\limits_{#1}^{#2}}
\newcommand{\cupl}[2]{\bigcup\limits_{#1}^{#2}}
\newcommand{\proj}{\text{proj}}
\newcommand{\intd}[4]{\int\limits_{#1}^{#2} \! #3 \ \mathrm{d}#4}
\newcommand{\kein}{2^{d(n+i)}}
\newcommand{\ken}{2^{dn}}
\newcommand{\su}{\subset}
\DeclareMathOperator{\graph}{graph}
\theoremstyle{plain} \newtheorem{theorem}{Theorem}[section]
\theoremstyle{plain} \newtheorem{lemma}[theorem]{Lemma}
\theoremstyle{plain} 
\theoremstyle{plain} \newtheorem{cor}[theorem]{Corollary}
\theoremstyle{plain} 
\theoremstyle{plain} 
\theoremstyle{plain} 
\theoremstyle{plain} 
\theoremstyle{plain} \newtheorem{problem}[theorem]{Problem}
\theoremstyle{definition} \newtheorem{remark}[theorem]{Remark}
\theoremstyle{definition}
\theoremstyle{definition}
\title{Hausdorff dimension of union of lines that cover a curve}
\author[J. Cumberbatch]{James Cumberbatch}
\address[J. Cumberbatch]
{Department of Mathematics, Purdue University,
150 N. University Street, West Lafayette, IN 47907 USA}
\email{jcumberb@purdue.edu}
\author[T. Keleti]{Tam\'as Keleti}
\address[T. Keleti]
{Institute of Mathematics, E\"otv\"os Lor\'and University, P\'az\-m\'any P\'e\-ter s\'et\'any 1/c, H-1117 Budapest, Hungary}
\email{tamas.keleti@gmail.com}
\author[J. Zhang]{Jialin Zhang}
\address[J. Zhang]
{Department of Mathematics and Statistics,
Boston University, 111 Cummington Mall, Boston, MA 02215 USA}
\email{zjleric@bu.edu}
\thanks{This research was conducted as part of the Budapest Semesters in Mathematics Undergraduate Research Experience Program.
The second author was supported by the Hungarian National Research, Development and Innovation Office - NKFIH, 124749 and 129335.}
\keywords{Hausdorff dimension, Kakeya conjecture, lines, curve, covering.}
\subjclass[2010]{28A78}
\begin{document}

\begin{abstract}
We construct a continuously differentiable curve in the plane that can be covered by a collection of lines
such that every line intersects the curve at a single point and the union of the lines has Hausdorff dimension
$1$.
We show that for twice-differentiable curves this is impossible. In that case, the union of the lines must have
Hausdorff dimension $2$. 
If we use only tangent lines then the differentiability of the curve already implies that the union of the
lines must have Hausdorff dimension $2$, unless the curve is a line.
We also construct a continuous curve, which is in fact the graph of a strictly convex function, such that
the union of (one-sided) tangent lines has Hausdorff dimension $1$.
\end{abstract}

\maketitle

%-------------------------------------------------------------------------------------------
%-------------------------------------------------------------------------------------------
%-------------------------------------------------------------------------------------------
%-------------------------------------------------------------------------------------------

\section{Introduction}
In 1952 R.~O.~Davies \cite{Da52} proved that any subset $A$ of the plane
can be covered by a collection of lines such that the union of the lines
has the same Lebesgue measure as $A$. Clearly, we cannot expect the same for Hausdorff dimension but for a given set $A$ of the plane or more generally 
of $\R^n$ it is natural to ask how small the Hausdorff dimension of a set can be that 
contains lines through every point of $A$. If $A$ itself is a line or union of lines then the answer is trivial but in this case we can require that we cover
$A$ with lines that intersect $A$ in small sets. So one can ask for example
the following two variants.

\begin{problem}\label{p:general}
Fix $A\su\R^n$. How small the Hausdorff dimension of $B$ can be if 
for every $x\in A$ the set $B$ contains a line $l$ through $x$ such that
(variant 1) $l \cap A$ is a singleton / (variant 2) $l\not\subset A$?
\end{problem}

If $A$ is an $(n-1)$-dimensional hyperplane then the two variants are clearly the same and in this case Problem~\ref{p:general} turns out to be closely related to the
Kakeya conjecture, which states that any Borel set in $\R^n$ that contains a unit line segment in every direction must have Hausdorff dimension $n$. 
Using a projective map one can easily show as in the proof 
of \cite[Theorem 1.2]{Ke} (see also \cite[Theorem 11.15]{Ma15}) 
that the Kakeya conjecture would imply that the answer to Problem~\ref{p:general} is $n$ when $A$ is a hyperplane, 
and conversely, if the answer to Problem~\ref{p:general} is $n$ for a hyperplane then every set that contains lines in every 
direction must have Hausdorff dimension $n$.
The Kakeya conjecture is open for $n\ge 3$, only partial results are known. For $n=3$ the best lower bound for the Hausdorff dimension is $5/2+\varepsilon$ for some small constant $\varepsilon>0$ by a recent result of Katz and Zahl \cite{KZ}, the best known lower bound for general $n$ is $(2-\sqrt 2)(n-4)+3$, proved in 2002 by Katz and Tao \cite{KT}.
Since in the plane Kakeya conjecture was proved by Davies \cite{Da71}, the above argument gives that
if $n=2$ and $A$ is a line then the answer to Problem~\ref{p:general} is $2$. 
Recently, L.~Venieri \cite[Theorem 9.2]{LV} proved the following more general result.
\begin{theorem}(Venieri)\label{t:venieri}
Let $f:\R^{n-1}\to\R$ be a %differentiable 
Lipschitz function and let $A$ be a subset of the graph of
$f$ with positive $n-1$-dimensional Hausdorff measure. 
Let $B\su\R^n$ be a set such that for every $x\in A$ the set $B$ contains
a line $\ell_x$ through $x$, 
%such that 
and at those $x$, where $f$ has a tangent hyperplane $T_x$, $\ell_x\not\subset T_x$.
%$l$ is not in the tangent hyperplane of $f$ at $x$. 
Then the Hausdorff dimension of $B$ is at least $\frac{n+2}{2}$.
\end{theorem}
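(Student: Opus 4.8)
The plan is to reduce the statement to a Nikodym-type Kakeya estimate for lines that are uniformly transverse to a single hyperplane, and then to run a hairbrush/$L^2$ incidence argument whose nondegeneracy input is exactly this transversality.

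First I would perform standard measure-theoretic reductions. Since Hausdorff dimension does not increase under taking subsets, and since a countable union has the supremum of the dimensions of its pieces, I may fix a small $\ep>0$ and pass to subsets freely as long as each has dimension at least $n-1-\ep$, sending $\ep\to 0$ only at the very end. Because $f$ is Lipschitz, the gradient $\nabla f$ takes values in a fixed bounded set; covering this set by finitely many small caps and pigeonholing, I may assume that $\nabla f(x)$ lies in one small cap for all $x$ in a subset $A_1\su A$ with $\dim A_1\ge n-1-\ep$. On $A_1$ all tangent hyperplanes $T_x$ are then within a small angle of one fixed hyperplane $H$. Pigeonholing once more, I may assume the transversality angle between $l_x$ and $T_x$ is bounded below by a fixed $\theta_0>0$; provided the caps are small relative to $\theta_0$, each line $l_x$ makes angle $\gtrsim\theta_0$ with $H$ itself. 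An affine change of coordinates sends $H$ to $\{x_n=0\}$, so I arrive at the model configuration: an anchor set of dimension $\ge n-1-\ep$ inside a hyperplane, and through each anchor a line contained in $B$ that crosses the hyperplane at angle $\gtrsim\theta_0$. Crucially I do \emph{not} pigeonhole the line directions into a small cap, since the directions must remain free for the estimate to carry content.

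Next I would set up the quantitative engine. From $\dim A_1\ge n-1-\ep$ and Frostman's lemma I obtain a probability measure $\mu$ on the anchor set satisfying $\mu(D)\lesssim r^{\,n-1-\ep}$ for every ball $D$ of radius $r$. Discretizing at scale $\delta$, I replace each line by the $\delta$-tube $T_x$ around it; the Frostman bound guarantees a family of essentially distinct tubes whose anchors are $\delta$-separated in the hyperplane. Writing $B_\delta$ for the $\delta$-neighborhood of $B$, the goal becomes a lower bound for $|B_\delta|$, or equivalently a bound for the associated Nikodym maximal function; by the standard passage from maximal/single-scale volume estimates to Hausdorff dimension (which runs over all scales and therefore controls Hausdorff, not merely box, dimension) such a bound yields $\dim B\ge\frac{n+2}{2}-O(\ep)$. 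The heart of the matter is the $L^2$ overlap estimate, where Cauchy--Schwarz gives
$$\Big(\sum_x|T_x|\Big)^2\;\le\;|B_\delta|\sum_{x,y}|T_x\cap T_y|,$$
so it suffices to control $\sum_{x,y}|T_x\cap T_y|$ from above. Here transversality to $H$ is decisive on two counts: two $\delta$-tubes anchored at distinct hyperplane points and each crossing $H$ at angle $\gtrsim\theta_0$ satisfy the sharp bound $|T_x\cap T_y|\lesssim \delta^{n}/\max(\delta,\angle(T_x,T_y))$ with uniform constant, and the same transversality couples the separation of anchors to the angular separation of directions, bounding for a fixed tube the number of tubes meeting it within a given angle.

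I expect the main obstacle to be precisely this last incidence bookkeeping. A crude count that ignores the positions of the anchors only yields dimension $2$, and the plain bush argument only yields $\frac{n+1}{2}$; extracting the full exponent requires using the spatial distribution of anchors (via $\mu$) together with the transversality in a single estimate, so that tubes clustered within a small angle of a fixed tube are also forced to be spatially spread, reproducing the planar hairbrush contribution. Keeping both the angular and the spatial information under control simultaneously, uniformly as $\delta\to 0$ and with only the lower Frostman regularity $n-1-\ep$ available, is the delicate step; everything else is reduction and standard scaling.
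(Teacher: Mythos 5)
First, a point of reference: the paper does not prove this statement at all --- it is quoted as an external result, \cite[Theorem 9.2]{LV}, from Venieri's dissertation, where it is obtained from an axiomatized version of Wolff's hairbrush argument. So your proposal can only be judged on its own terms. Your overall architecture is the right one: Frostman measure on $A$, $\delta$-discretization into tubes, an $L^2$/incidence scheme whose nondegeneracy input is the transversality of each line to the tangent hyperplane, and the recognition that $\frac{n+2}{2}$ is precisely the hairbrush exponent. That matches the actual proof in spirit.

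There are, however, two genuine gaps. The first is in your reduction: you claim that after an affine change of coordinates you may assume the anchor set lies \emph{inside a hyperplane}. This step fails. An affine map can make the tangent hyperplanes nearly horizontal, but no line-preserving map flattens the graph of a merely differentiable Lipschitz function onto $\{x_n=0\}$; the anchors must remain on the Lipschitz graph, and the whole incidence argument has to be run in that setting. (This is exactly why Venieri works with axioms on tube families general enough to cover graph-anchored configurations instead of reducing to the hyperplane-anchored Nikodym case.) The second gap is decisive: the step that carries the entire content of the theorem --- upgrading the bush bound $\frac{n+1}{2}$, which is all your Cauchy--Schwarz setup plus the pairwise tube-intersection bound can deliver, to the hairbrush bound $\frac{n+2}{2}$ --- is not carried out. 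You explicitly defer it as ``the main obstacle'' and ``the delicate step,'' but that bookkeeping \emph{is} the proof: one must fix a heavy tube, sort the tubes meeting it by intersection point and angle, and establish a two-dimensional C\'ordoba-type estimate inside each $2$-plane through the heavy tube, using the Frostman measure and the transversality to show those planar families are nondegenerate. A minor further issue is that your pigeonholing is circular (the gradient caps are chosen before $\theta_0$, yet their size must depend on $\theta_0$); that one is easily repaired by pigeonholing the transversality angle first. As it stands, the proposal is a correct plan whose central step is missing.
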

In fact, Venieri proved an even more general result 
for rectifiable sets and approximate tangent planes
%that also applies for 
%Lipschitz functions which are not differentiable everywhere if tangent hyperplanes are replaced by 
%the so-called approximate tangent
%hyperplanes, 
and in her result instead of $l\su B$ it is enough to assume that
$B\cap l$ contains a line segment.
This is also related to the so called Nikodym sets,
which are Lebesuge measure zero subsets of $\R^n$
such that through every point of $\R^n$ there exists a line through $x$ whose intersection with the set contains a unit segment. 
The Hausdorff dimension of the Nikodym sets are also conjectured to be $n$ and this conjecture would follow from the Kakeya conjecture by the above mentioned argument.

In this paper we mainly work in the plane
%study the planar case 
($n=2$). 
Then Theorem~\ref{t:venieri} gives the sharp estimate $\dim B=2$ but it is
not clear what happens if we also allow tangent lines. 
One might expect that still we should get that the dimension of B be 2 unless some individual lines
cover too big a part of $A$. This will turn out to be wrong.
First we study the case when we use only tangent lines. 
We construct (Theorem~\ref{Tbinc}) a strictly convex function $F:[0,1]\to\R$
such that the union of its tangent lines and one-sided tangent lines has Hausdorff
dimension $1$. Note that strict convexity implies that every tangent line
or one-sided tangent line intersects the graph of $F$ in a single point. 
Recall that although a convex function does not have to be differentiable
everywhere, it is differentiable almost everywhere and the one-sided 
derivatives exist everywhere (if we also allow $\pm\infty$), so the one-sided
tangent lines exist everywhere, thus the tangent lines and one-sided tangent
lines cover the whole graph of $F$. 
We also prove (Theorem~\ref{TC1}) that there exists no such construction
if we require differentiability everywhere: the union of the tangent lines
of a differentiable function has always Hausdorff dimension $2$ unless the
function is linear.

And what if we allow both tangent and non-tangent lines? 
We show (Theorem~\ref{TCantc}) that in this case even continuous 
differentiability is not enough to exclude very small covers:
we construct a strictly convex
$\C^1$ function such that its graph 
can be covered with lines such that the union of the lines
has Hausdorff dimension $1$. Finally, we give necessary conditions about the
smoothness of the curve that guarantee that the union of the covering lines
has Hausdorff dimension $2$.

In this paper, for any given set $S$, we will use $H^d(S)$ to denote the $d$-dimensional Hausdorff measure of the set $S$, and we will use $\dim(S)$ to denote the Hausdorff dimension of $S$. 

%-------------------------------------------------------------------------------------------
%-------------------------------------------------------------------------------------------
%-------------------------------------------------------------------------------------------
%-------------------------------------------------------------------------------------------

\section{The results}
In the following lemma, the phrase ``tangent lines'' refers to left tangent lines, right tangent lines, or any line with slope between the two. The only thing required is for any ``tangent line'' to lie below the curve other than at its point of intersection. This criterion is fulfilled by regular tangent lines because of convexity. 
\begin{lemma}\label{lslope}
Let $F: [0,1]\to\R$ be a strictly convex function. Let $X$ be a set of lines tangent to $F$ in the above mentioned sense. 
Then the set $\bigcup X$ has Hausdorff dimension one if the set of the slopes of $X$ has Hausdorff dimension zero.
\end{lemma}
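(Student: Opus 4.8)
The plan is to transfer the zero-dimensionality of the slope set $M$ to the union of the lines by exploiting the single fact that, for a convex function, the $y$-intercept of a supporting line is a \emph{locally Lipschitz} function of its slope. Writing each line of $X$ as $l_m\colon y=mx+c(m)$, this says that $\bigcup X$ is (up to a bounded-box truncation) the image under the map $\Phi(m,x)=(x,\,mx+c(m))$ of a product $M\times[-R,R]$, and since $\Phi$ is Lipschitz on bounded pieces this forces $\dim\bigcup X\le \dim M+1=1$. The reverse inequality is immediate, because as soon as $M\neq\emptyset$ the set $\bigcup X$ contains an entire line and hence has dimension at least $1$.

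The structural input is the following. For a slope $m$, a line $y=mx+c$ lies weakly below the graph of $f$ and touches it exactly when $c=c(m):=\min_{x\in[0,1]}\bigl(f(x)-mx\bigr)$, the minimum being attained at the point of tangency; this captures left, right and intermediate tangent lines at once, matching the convention fixed before the lemma. For each fixed $x$ the map $m\mapsto f(x)-mx$ is affine, so $c$ is an infimum of affine functions and is therefore \emph{concave} on the interval $J$ of slopes where it is finite (the range of the one-sided derivatives of $f$). A finite concave function is locally Lipschitz on the interior of its domain, which is precisely the Lipschitz control I need. Strict convexity enters twice: it guarantees that each slope is realised by at most one tangent line — two distinct touching points sharing a slope would make $f$ affine between them — so the lines of $X$ are indexed injectively by $m\in M$; and it makes the minimum defining $c(m)$ genuinely attained on the relevant slopes.

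To make this rigorous I would first reduce to a bounded, Lipschitz-controlled situation by countable decomposition, using the stability of Hausdorff dimension under countable unions. Exhaust $\operatorname{int}(J)$ by compact intervals $K_1\subseteq K_2\subseteq\cdots$ on each of which $c$ is Lipschitz with some constant $L_k$ and $|m|$ is bounded; the set $M$ meets $\partial J$ in at most two points, contributing at most two individual lines of dimension one that are harmless. It then suffices to bound $\dim\bigl(\bigcup_{m\in M\cap K_k} l_m\cap Q\bigr)$ for each $k$ and each square $Q=[-R,R]^2$, $R\in\N$. Here I would run a direct covering estimate: fix $s>1$, and since $\dim(M\cap K_k)=0$ cover $M\cap K_k$ by intervals $I_j$ of length $\delta_j<\delta_0$ with $\sum_j\delta_j^{\,s-1}<\eta$. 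For slopes in a single $I_j$, the Lipschitz bound on $c$ together with $|x|\le R$ shows that at each vertical line the values $mx+c(m)$ vary by at most $(R+L_k)\delta_j$; hence the part of $\bigcup_{m\in I_j} l_m$ inside $Q$ lies in an $O((R+L_k)\delta_j)$-neighbourhood of the single line $l_{m_j}$, coverable by $O(1/\delta_j)$ squares of side $O(\delta_j)$ (implied constants depending only on $k,R$). Summing the $s$-dimensional contributions gives a bound of order $\sum_j\delta_j^{\,s-1}<\eta$, with covering squares of diameter tending to $0$; thus $H^s\bigl(\bigcup_{m\in M\cap K_k} l_m\cap Q\bigr)=0$ for every $s>1$, so each piece has dimension at most one, and taking the countable union over $k$ and $R$ finishes the argument.

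I expect the main obstacle to be exactly the Lipschitz control of the intercept $c(m)$: this is the one place where convexity does real work, converting zero-dimensionality of the slopes into one-dimensionality of the union, and the ``infimum of affine functions $\Rightarrow$ concave $\Rightarrow$ locally Lipschitz'' viewpoint is what makes it transparent and uniform across one-sided and intermediate tangent lines. Everything else — the unboundedness of the slope set, the infinite extent of the lines, and the at-most-two boundary slopes — is routine bookkeeping dispatched by the countable decompositions above.
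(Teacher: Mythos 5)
Your proof is correct, and it shares the paper's skeleton --- parameterize each tangent line by its slope, show the $y$-intercept is a Lipschitz function of the slope, then sweep out the lines --- but both steps are carried out by genuinely different means. For the Lipschitz step the paper argues elementarily: two tangent lines $l_1,l_2$ with tangency abscissas $x_1\le x_2$ satisfy $l_1(x_2)<l_2(x_2)$ and $l_1(x_1)>l_2(x_1)$ by strict convexity, so they cross at some $x^*\in[x_1,x_2]\subseteq[0,1]$, whence $|b_1-b_2|=x^*|a_1-a_2|\le|a_1-a_2|$; this is a \emph{global} Lipschitz constant $1$ on the whole slope set, so no compact exhaustion, boundary-slope bookkeeping, or countable-union argument is needed. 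Your route via concavity of $c(m)=\min_{x\in[0,1]}(f(x)-mx)$ yields only local Lipschitzness and hence forces all that extra decomposition; note, though, that your own formula gives the global constant for free, since each $m\mapsto f(x)-mx$ is affine with slope $-x\in[-1,0]$, and an infimum of uniformly $1$-Lipschitz functions is $1$-Lipschitz wherever finite (and $c$ is finite on all of $\R$, a convex function on $[0,1]$ being bounded below) --- this observation would collapse your third paragraph entirely. For the concluding step the paper maps $Y\times\R$ (where $Y$ is the slope--intercept code set, a Lipschitz graph over a $0$-dimensional set) into the plane by the locally Lipschitz map $((a,b),t)\mapsto(t,at+b)$ and invokes $\dim(Y\times\R)=\dim Y+1$, whereas you run an explicit covering estimate; yours is more self-contained (no product-dimension inequality needed), the paper's is shorter. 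A last small contrast: the paper gets injectivity of the map from slope to intercept for free from its Lipschitz bound, while you derive it separately from strict convexity.
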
%Merge arguments%Done
%(lines tangent to $f$, might need to be rephrased.)%
\begin{proof}
Let $Y$ be the code set of $X$, that is $Y=\{(a,b)\mid (y=ax+b) \in X\}$. Let $(a_1,b_1), (a_2,b_2)\in Y$. 
% Tamas added on August 20, 2018 this:
Our first goal is to show that $|b_1-b_2|\le|a_1-a_2|$.

% until here
Let $l_1(x) = a_1x+b_1$, and $l_2(x) = a_2x+b_2$. At the tangent points we have $a_ix_i+b_i=F(x_i)$, for $i\in\{1,2\}$.  
% Tamas added on August 20, 2018 this:
We can clearly suppose that $x_1\le x_2$.
We claim that there exists a point $x^*\in[x_1,x_2]\subset[0,1]$
such that $l_1(x^*)=l_2(x^*)$.
This clearly holds if $x_1=x_2$, so we can suppose that $x_1\neq x_2$.
% until here
Since $F$ is strictly convex, we have $l_1(x)<F(x)$ for all $x\neq x_1$. Thus, we have $l_1(x_2)< F(x_2) = l_2(x_2)$. Similarly we obtain $l_1(x_1) = F(x_1)>l_2(x_1)$. By continuity of $l_1-l_2$, 
% Tamas modified on August 20, 2018 this:
we get that indeed
there must be a point $x^*\in [x_1,x_2]\subseteq[0,1]$ such that $l_1(x^*)=l_2(x^*)$ and so have $|b_1-b_2| = x^*|a_1-a_2| \leq |a_1 - a_2|$.

% until here
Thus $Y$ is the graph of a Lipschitz function defined on a set of Hausdorff dimension zero. Then $Y$ has Hausdorff dimension zero. Let $g:Y\times\R \to \R^2$ be the function $g((a,b),t)=(t,at+b)$. Note that
$\dim(Y\times\R)=\dim Y +1=1$, 
$g$ is locally Lipschitz and maps $Y\times\R$ to $\bigcup X$. Thus, $\bigcup X$ has dimension one.
\end{proof}

%-------------------------------------------------------------------------------------------
The following lemma is a special case of a result of Molter and Rela \cite{Furstenberg}
and also of a different result of Falconer and Mattila
\cite{KFPM}.

%-------------------------------------------------------------------------------------------
\begin{lemma}\label{MR}
Let $X$ be a set of lines in the plane. Then, the set $\bigcup X$ has Hausdorff dimension $2$ if the set of slopes of $X$ has Hausdorff dimension at least $1$. 
\end{lemma}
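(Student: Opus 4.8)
The plan is to recognize $\bigcup X$ as a Furstenberg-type set assembled from full lines and then quote the cited estimates of Molter--Rela \cite{Furstenberg} or Falconer--Mattila \cite{KFPM}. First I would convert the hypothesis on slopes into one about directions. Discarding any vertical lines (which only enlarge $\bigcup X$ and so cannot lower its dimension), the remaining lines have slopes forming a set $S\subseteq\R$ with $\dim S\ge 1$. Since $a\mapsto\arctan a$ is locally bi-Lipschitz, the associated set of directions $D=\{\arctan a : a\in S\}$ also satisfies $\dim D\ge 1$.

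Next, for each slope $a\in S$ I would select a single line $l_a\in X$ of slope $a$ and form $E=\bigcup_{a\in S} l_a\subseteq\bigcup X$. By construction $E$ contains an entire line, and hence a segment of Hausdorff dimension $1$, in every direction belonging to $D$. Thus $E$ is an $(\alpha,\beta)$-Furstenberg set with $\alpha=1$ and $\beta=\dim D\ge 1$.

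Finally I would invoke the quoted result. In the relevant regime its lower bound takes the form $\dim E\ge 2\alpha+\beta-1$; equivalently, for a union of full lines whose direction set has dimension $\dim D$, one has $\dim E\ge 1+\dim D$ (capped at $2$). Substituting $\alpha=1$ and $\beta\ge 1$ gives $\dim E\ge 2$, and since $E\subseteq\bigcup X\subseteq\R^2$ forces $\dim\bigcup X\le 2$, we conclude $\dim\bigcup X=2$.

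The step I expect to demand the most care — indeed the only nonroutine point — is matching the parameters when applying the cited theorem: one must confirm that ``a full line in each direction'' is precisely the case $\alpha=1$ of their Furstenberg estimate, and that the bound is monotone in $\beta$, so that $\dim S\ge 1$ (rather than $\dim S=1$ exactly) already suffices. The passage from slopes to directions and the final capping at dimension $2$ are both straightforward.
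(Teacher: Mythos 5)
Your proposal is correct and matches the paper's treatment: the paper gives no independent argument for this lemma, stating only that it is a special case of the Molter--Rela Furstenberg-set estimate \cite{Furstenberg} (and of a result of Falconer--Mattila \cite{KFPM}), which is exactly the reduction you carry out, with the parameter matching ($\alpha=1$, $\beta=1$, bound $2\alpha+\beta-1=2$) done correctly. Your extra care in passing from slopes to directions via $\arctan$ and in noting that a slope set in $\R$ has dimension exactly $1$ is a sound filling-in of details the paper leaves implicit.
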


Our first goal is to show there exists a strictly convex function such that
the union of the set of one-sided tangent lines has dimension $1$.
The idea is the following. We construct a strictly increasing function whose image is zero dimensional. Then, its integral together with the set of one-sided tangent lines will satisfy Lemma \ref{lslope}. Thus, the union of the set of one-sided tangent lines will have Hausdorff dimension one. 
%In fact, this integral is not differentiable at countably many points.
%, as we can see the following construction.

\begin{theorem}\label{Tbinc}
There exists a strictly convex function $F:[0,1]\rightarrow \R$ such that the union of the set of %right 
one-sided tangent lines $E$ has Hausdorff dimension $1$.
\end{theorem}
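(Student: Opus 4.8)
The plan is to follow the roadmap sketched just before the statement: produce a strictly increasing function $f\colon[0,1]\to\R$ whose image has Hausdorff dimension $0$, set $F(x)=\int_0^x f(t)\,\mathrm{d}t$, and then apply Lemma~\ref{lslope} to the family $E$ of one sided tangent lines of $F$. Since $f$ is bounded and monotone it is Riemann integrable, so $F$ is well defined and continuous. Strict monotonicity of $f$ will give strict convexity of $F$, and the one sided derivatives of $F$ at each point will be one sided limits of values of $f$, hence will lie in the image of $f$. Because that image sits inside a closed zero dimensional set, the slope set of $E$ will have Hausdorff dimension $0$, and Lemma~\ref{lslope} will force $\dim\bigcup E=1$ (the dimension is at least $1$ because $E$ contains genuine lines).

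The heart of the matter is the construction of a strictly increasing $f$ with zero dimensional image; note that such an $f$ cannot be continuous, since a continuous strictly increasing function has an interval as its image. I would obtain $f$ as a pseudo-inverse of a Cantor type function. First fix a Cantor set $C\subseteq[0,1]$ with $\dim C=0$, built by keeping at stage $n$ exactly $2^n$ intervals of length $\ell_n=2^{-n^2}$ (the nesting $2\ell_{n+1}\le\ell_n$ holds), so that the natural covers give $H^s(C)\le 2^n\ell_n^{\,s}=2^{\,n-sn^2}\to 0$ for every $s>0$, whence $\dim C=0$. Let $\phi\colon[0,1]\to[0,1]$ be the associated Cantor--Lebesgue function: continuous, nondecreasing, surjective, and constant on each interval complementary to $C$. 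Define
\[
f(t)=\inf\{x\in[0,1]:\phi(x)\ge t\}.
\]
Because $\phi$ is continuous and nondecreasing the infimum is attained, and $f(t)$ is either the left endpoint of a complementary interval (when $t$ is the constant value of $\phi$ there) or the unique preimage of $t$; in both cases $f(t)\in C$, so $f([0,1])\subseteq C$ has dimension $0$. Surjectivity of $\phi$ yields strict monotonicity: for $t_1<t_2$ pick $t'$ strictly between them and $x'$ with $\phi(x')=t'$, and then $f(t_1)\le x'<f(t_2)$.

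It remains to assemble the pieces. Strict convexity of $F$ follows by comparing difference quotients: for $a<c<b$ the two chord slopes are the averages of $f$ over $[a,c]$ and over $[c,b]$, and strict monotonicity of $f$ makes the first strictly smaller than the second. At each point $x_0$ the one sided derivatives of $F$ equal $f(x_0^-)$ and $f(x_0^+)$, which again lie in $C$ because $C$ is closed. Hence the slope set of the family $E$ of one sided tangent lines is contained in $C$ and has dimension $0$, and since every one sided tangent line lies below the graph away from its contact point (convexity) it is a ``tangent line'' in the sense of Lemma~\ref{lslope}. Applying that lemma with $X=E$ gives $\dim\bigcup E=1$.

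One point deserves care, and it is worth flagging in the write-up. The function $F$ genuinely has corners: $f$ is discontinuous (its image is totally disconnected), so $f$ has countably many jumps, and at each jump $F'_-(x_0)=f(x_0^-)<f(x_0^+)=F'_+(x_0)$. The convention in Lemma~\ref{lslope} permits a tangent line of \emph{any} slope between the two one sided slopes, and the union of those in-between slope intervals over all corners would have dimension $1$; so one must be sure the theorem concerns only the genuine one sided tangent lines, whose slopes are exactly the values $f(x_0^\pm)\in C$. With that understood the slope set really is zero dimensional and the argument closes. The only genuinely delicate step is the construction of $f$ in the second paragraph; everything after it is routine verification.
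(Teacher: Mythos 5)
Your proof is correct and follows essentially the same route as the paper: both construct a strictly increasing $f$ whose image lies in a zero-dimensional set (yours as the pseudo-inverse of a Cantor--Lebesgue function, the paper's as an explicit binary-digit map $\sum\omega_i 2^{-i}\mapsto\sum\omega_i 2^{-i^2}$, which is the same object in different clothing), define $F(x)=\int_0^x f(t)\,dt$, and apply Lemma~\ref{lslope}. The only notable difference is in handling the corner points where $f$ jumps: the paper discards these countably many points (a countable union of lines still has dimension $1$), while you keep them by observing that the one-sided slopes $f(x_0^\pm)$ remain in the closed zero-dimensional set $C$ --- a slightly cleaner way to close the same argument.
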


\begin{proof}
For all $x\in[0,1]$, we can write 
$x=\sum_{i=1}^\infty \omega_i/2^i,\text{ for }\omega_i\in\{0,1\} \text{ for all } i\in\N.$ Let $B=\{k/2^n\mid k,n\in \N; k\leq 2^n\}$. Observe that points in $B$ can be represented by two different sums (one is a finite sum, the other is an infinite sum), we shall always choose the finite sum. Consider the function $f:[0,1]\rightarrow [0,1]$, defined as
\[f\bigg(\sum_{i=1}^\infty \frac{\omega_i}{2^i}\bigg)=\sum_{i=1}^\infty\frac{\omega_i}{2^{i^2}}, \qquad \text{for any }\sum_{i=1}^\infty \frac{\omega_i}{2^i}\in[0,1].\]
%First we are going to show that $f$ is Riemann integrable. One can easily check that $f$ is continuous on $[0,1]\setminus B$. Notice that $B$ is a countable set of points, hence $\ell(B)=0$. Then, we have $f$ is bounded function on a compact interval $[0,1]$, and it is continuous almost every. By Riemann-Lebesgue Theorem, we have $f$ is Riemann integrable.
Since monotonically increasing functions are integrable, we have that $f$ is integrable. Let $F:[0,1]\rightarrow \R$ be defined as
\[F(x)=\int_{0}^xf(x)\ dx.\]
Since $f$ is strictly increasing, we have that $F$ is a strictly convex function. 
Notice that $f$ is continuous on $[0,1]\setminus B$, so $F$ is differentiable on $[0,1]\setminus B$ and $F'(x)=f(x)$ on 
$[0,1]\setminus B$. 
%Notice that $F$ is differentiable almost everywhere. 
Let $E$ be the set of one-sided tangent lines of $F$.

Second we are going to show that the union of $E$ has Hausdorff dimension $1$. 
Since $B$ is countable it is enough to consider the tangent lines at the points of $[0,1]\setminus B$.
Notice that the set of slopes of these tangent lines is exactly 
%the range of function $f$. 
$f([0,1]\setminus B)$, so it is a subset of the range of $f$.
However, the range of $f$ can be covered by $2^n$ closed intervals with diameter $2^{-n^2}$. Then, for all $d>0$, we have
\[H^d\big(f([0,1])\big)\leq \lim\limits_{n\rightarrow \infty}2^n\bigg(\frac{1}{2^{n^2}}\bigg)^d= \lim\limits_{n\rightarrow \infty}\frac{1}{2^{d\cdot n^2-n}}=0.\] Then, we have $\dim\big(f([0,1])\big)=0$. By %the
Lemma~\ref{lslope}, since $F$ is strictly convex, we 
get that the union of the one-sided tangent lines at the points 
of $[0,1]\setminus B$ has Hausdorff dimension $1$. Since 
$B$ is countable, this completes the proof.
%$E$ contains the tangent lines at the points of $[0,1]\setminus B$, the union of $E$ Hausdorff dimension $1$, as desired.
\end{proof}

%-------------------------------------------------------------------------------------------
In Theorem \ref{Tbinc}, the function $F$ is differentiable almost everywhere. The following result shows that there is no such differentiable construction. 
%(2/26/2017) This paragraph is added by Eric and Keleti.
%-------------------------------------------------------------------------------------------

\begin{theorem}\label{TC1}
Let $F:[0,1]\rightarrow \R$ be a differentiable function with nonconstant derivative. Then the union of the tangent lines of $F$ has Hausdorff dimension $2$.
\end{theorem}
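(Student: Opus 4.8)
The plan is to show that a differentiable function with nonconstant derivative has a set of tangent-line slopes of Hausdorff dimension at least $1$, and then invoke Lemma~\ref{MR} to conclude that the union of the tangent lines has dimension $2$. The key point is that the slope of the tangent line at $x$ is exactly $f'(x)$, so the set of slopes equals the range $f'([0,1])$. Thus the whole theorem reduces to the following claim: \emph{if $f$ is differentiable on $[0,1]$ and $f'$ is nonconstant, then $f'([0,1])$ has Hausdorff dimension at least $1$.}

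\medskip

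First I would observe that a nonconstant derivative takes at least two distinct values, say $f'(p) < f'(q)$. The crucial tool here is Darboux's theorem: the derivative of a differentiable function has the intermediate value property, even though $f'$ need not be continuous. Consequently $f'([0,1])$ contains the entire closed interval $[f'(p), f'(q)]$ (or $[f'(q),f'(p)]$), which is a nondegenerate interval. A nondegenerate interval in $\R$ has Hausdorff dimension $1$, so the set of slopes has dimension at least $1$. Applying Lemma~\ref{MR} to the set $X$ of tangent lines of $f$ then immediately gives that $\bigcup X$ has Hausdorff dimension $2$.

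\medskip

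The main obstacle --- and the only subtle point --- is precisely the justification that the range of $f'$ contains an interval. One is tempted to argue that $f'$ is continuous, but that is false in general (e.g. $x^2\sin(1/x)$ has a derivative that is discontinuous at $0$). The Darboux property is exactly what rescues the argument: it guarantees that even a badly discontinuous derivative cannot skip values, so its image between any two attained values is a full interval. Once this is in hand, the rest is routine. I would therefore structure the proof as: (i) identify the slope set with $f'([0,1])$; (ii) invoke Darboux to show this set contains a nondegenerate interval whenever $f'$ is nonconstant; (iii) conclude $\dim \geq 1$ for the slopes; (iv) apply Lemma~\ref{MR}.
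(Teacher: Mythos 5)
Your proof is correct and follows exactly the paper's own argument: use Darboux's theorem to show the slope set $f'([0,1])$ contains a nondegenerate interval, hence has Hausdorff dimension at least $1$, then apply Lemma~\ref{MR}. No differences worth noting.
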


\begin{proof}
Since $F'$ is non-constant, there exist $a,b\in [0,1]$ such that $F'(a)\neq F'(b)$. Without loss of generality, suppose $F'(a)<F'(b)$. By Darboux's Theorem, for all $c\in [F'(a),F'(b)]$, there exists $x\in [a,b]$ (or $x\in [b,a]$ if $a>b$) such that $F'(x)=c$. Since the derivative is the directions of the tangent lines, %then
we obtained that
the set of directions of tangent lines has dimension $1$. %By
Then, by
Lemma \ref{MR}, the union of the tangent lines of $F$ has Hausdorff dimension $2$.
\end{proof}

%-------------------------------------------------------------------------------------------
In Theorems \ref{Tbinc} and \ref{TC1}, we only used tangent lines (or one-sided tangents). Differentiable almost everywhere is the nicest condition of a such function whose union of tangent lines (one-sided tangent lines) has Hausdorff dimension $1$. The following theorem shows that if we use some non-tangent lines we can make a $\C^1$ function which fits the criteria.
\begin{theorem}\label{TCantc}
There exists a strictly convex $\C^1$ function $F:[0,1]\to\R$ and a set of lines $E$ such that $\graph(f) \subset \bigcup E$,
every line of $E$ intersects $\graph(f)$ in a single point
and $\dim\bigcup E = 1$.
\end{theorem}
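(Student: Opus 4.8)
The plan is to produce a strictly convex $\C^1$ function $F$ together with a set of lines $E$, each meeting $\graph(F)$ in exactly one point, whose code set $Y=\{(a,b):(y=ax+b)\in E\}$ has Hausdorff dimension $0$. Once this is achieved we finish exactly as in the last paragraph of the proof of Lemma~\ref{lslope}: the map $g((a,b),t)=(t,at+b)$ is locally Lipschitz and carries $Y\times\R$ onto $\bigcup E$, so $\dim\bigcup E\le\dim(Y\times\R)=\dim Y+1=1$, while $\dim\bigcup E\ge 1$ because $\bigcup E$ contains a line. I would use only two kinds of lines, both of which automatically meet the graph once: genuine tangent lines of the strictly convex $F$ (each lies strictly below the graph away from its point of tangency), and lines of a single fixed slope $a_\ast<F'(0)=\min F'$, for which $F(x)-a_\ast x$ is strictly increasing and hence the line meets the graph exactly once.

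The difficulty is that for a $\C^1$ function $F'$ is continuous, so its range is the whole interval $[F'(0),F'(1)]$ and the tangent slopes cannot all lie in a dimension zero set — this is precisely what doomed the tangent-line-only situation of Theorem~\ref{TC1}. The escape is that $F'$, though continuous, is only a homeomorphism, and a homeomorphism can compress a topologically large set to dimension zero. Concretely, I would build $F'$ (continuous and strictly increasing, so that $F=\int F'$ is strictly convex and $\C^1$) by a nested Cantor construction. At the first stage choose a Cantor set $C_0\subset[0,1]$ and prescribe $F'$ on $C_0$ so that $F'(C_0)$ has dimension zero — as in Theorem~\ref{Tbinc}, a lacunary digit rule of the type $\sum\omega_i2^{-i}\mapsto\sum\omega_i2^{-i^2}$ forces the image to be dimension zero — extending $F'$ affinely across the complementary gaps. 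The tangent lines at the points of $C_0$ then have a dimension zero set of slopes, and by the inequality $|b_1-b_2|\le|a_1-a_2|$ established in Lemma~\ref{lslope} their code set is a $1$-Lipschitz graph over these slopes, hence also dimension zero. I then repeat the construction inside each gap of $C_0$, and recurse. The tangent lines from all stages cover $A:=\bigcup_n C_n$, and their total code set is a countable union of dimension zero sets, hence dimension zero.

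It remains to cover the residual $R=[0,1]\setminus A$. Since each $C_n$ is nowhere dense, $A$ is meager and $R$ is a dense $G_\delta$; this is exactly why a single stage is not enough (one gap-complement is always a dimension one open set), and why the nesting is forced. By letting the gap lengths shrink fast enough at every stage (a Liouville-type schedule) I can arrange $\dim R=0$. The graph over $R$ is then covered by the parallel lines through $(x,F(x))$, $x\in R$, of the fixed slope $a_\ast$; their code set $\{(a_\ast,F(x)-a_\ast x):x\in R\}$ is the image of the dimension zero set $R$ under the Lipschitz map $x\mapsto F(x)-a_\ast x$, hence dimension zero. Adjoining these to the tangent-line codes keeps the full code set $Y$ dimension zero, and every point of $[0,1]$ is covered, completing the construction.

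The main obstacle is the bookkeeping in the construction of $F'$: one must choose the nested Cantor sets together with the values of $F'$ on them so that $F'$ is simultaneously strictly increasing and continuous (for strict convexity and for $\C^1$), so that the slope set produced at every single stage is dimension zero, and so that the gaps contract quickly enough to force $\dim R=0$. Granting this, checking that the countable union of the stagewise dimension zero slope sets is again dimension zero, and that each line of $E$ (being either a tangent line of the strictly convex $F$ or a line of subrange slope $a_\ast$) meets the graph in a single point, is then routine.
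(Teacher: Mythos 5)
Your proposal is correct, and it proves the theorem by what is essentially a mirror image of the paper's construction. The paper also builds $F=\int F'$ with $F'$ a strictly increasing continuous function coming from an iterated Cantor-set construction, also splits the graph into a part covered by tangent lines whose slopes form a dimension-zero set and a small exceptional part covered by lines that trivially meet the graph once, and also concludes via Lemma~\ref{lslope} and the code-set map $g$. The difference is which part plays which role. In the paper the exceptional set is the countable union $C^*=\bigcup_n C_n$ of zero-dimensional Cantor sets, covered by \emph{vertical} lines (so its union $C^*\times\R$ has dimension $1$ at once, and $\dim C^*=0$ is trivial to arrange by choosing $C$ zero-dimensional), while the tangent lines sit over the residual set $[0,1]\setminus C^*$; the real work is the covering estimate showing $\dim F'([0,1]\setminus C^*)=0$, obtained by damping the stage-$n$ copies of the Cantor function with weights $2^{-(n+i)}$. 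You dualize this: your tangent lines sit over $A=\bigcup_n C_n$, where $\dim F'(C_n)=0$ comes for free from the per-stage lacunary digit rule, and the burden moves to two places the paper never has to visit: (i) arranging $\dim R=0$ for the residual set $R=[0,1]\setminus A$ -- genuinely possible, since a dense $G_\delta$ can have dimension zero (Liouville numbers), and compatible with the rest of the construction because the domain geometry of the $C_n$ and the values prescribed for $F'$ on them are independent constraints; and (ii) continuity and strict monotonicity of the limit $F'$, which requires the image-jumps across stage-$n$ gaps to decay uniformly (they do, geometrically, since each digit-rule jump is a fixed fraction less than $1$ of its parent image interval; alternatively one can damp the stages by weights exactly as the paper does). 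Your key observation -- that the digit rule, which is discontinuous on $[0,1]$ (precisely why Theorem~\ref{Tbinc} fails to be $\C^1$), becomes continuous when restricted to a Cantor set because all of its jumps fall across the gaps -- is the same mechanism that makes the paper's construction $\C^1$; and your fixed-slope lines with $a_\ast<\min F'$ are a perfectly good substitute for the paper's vertical lines, as both kinds meet the graph exactly once and contribute a dimension-one union. What each approach buys: the paper's assignment of roles makes the smallness of the exceptional set automatic and concentrates all effort in a single slope estimate; yours makes the slope estimate automatic at the price of the $\dim R=0$ gap schedule and the convergence check, which you defer as ``bookkeeping''. Those deferred verifications are real work, but none of them hides an obstruction, so the proposal is sound.
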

\begin{proof}%Zero-dimensional cantor set before function%Function we obtain from
%The Cantor function maps the classical Cantor set, which has Hausdorff dimension $\frac{\log 2}{\log 3}$, onto a set of full measure. Using the iterative construction of the Cantor Function but by taking out a larger interval each iteration, we can instead construct a monotone continuous function which maps a zero dimensional set onto a set of full measure. Let this ``zero-dimensional Cantor function" be called 
% * <jcumberb@oberlin.edu> 2017-07-03T06:08:47.117Z:
% 
% Any function doesn't work, the construction as given relies on the fact that the function is constant over any interval in [0,1]\setminus C. For example, start with any increasing concave up C1 function, any line cover will have positive dimension by Theorem 8, adding it to itself a bunch of times doesn't change that. The proof relies on the fact that starting with the integral of the cantor function fits the line cover criteria, then changes it so that it becomes strictly convex while still fitting the criteria. It could be altered to use any function which maps a zero-dimensional set onto a set of full measure, but the proof would be more complicated and a Cantor function is the simplest example of such a function.
% 
% ^.
Let $C$ be a zero Hausdorff-dimensional Cantor set, that is, a zero Hausdorff-dimensional nonempty perfect nowhere dense set in the interval $[0,1]$. 
First we construct a Cantor type function $\phi:\R\to[0,1]$ from $C$. 
Let $\mathcal J$ be the collection of the connected components of $\R\setminus C$ and let $D$ be the set of finite binary numbers in $[0,1]$. 
It is well known and not hard to show that there exists an order preserving bijection $\psi:\mathcal J\to D$, where we consider the natural ordering on $\mathcal J$.
On $\R\setminus C$ we define $\phi$ as
$\phi(x)=\psi(J)$ for $x\in J\in \mathcal J$.
%For $x\in J\in \mathcal J$ let $\phi(x)=\psi(J)$.
The same way as in the case of the classical Cantor function one can easily show that $\phi$ can be extended to $C$ such that $\phi$ is continuous and non-decreasing.

%Let $\phi:\R\to [0,1]$ 
%such that on $[0,1]$ it is a Cantor type function obtained from this set and elsewhere, 
%%be any nondecreasing continuous function such that
%$\phi(x)=0$ for $x<0$ and $\phi(x)=1$ for $x>1$. 
Note that $\phi$ maps all points other than a set of dimension 0 to a set of dimension 0. We wish to construct a strictly increasing function which retains this property, the integral of which will yield the desired function.
%(August 2018 James) added above sentence explaining significance of $\phi$ and what we are doing (James 2018)
To do so, first we will construct a sequence of functions $(f_n)_{n\in\N}$ inductively. In addition to $(f_n)_{n\in\N}$, $a_{n,i}$ and $b_{n,i}$ will be sequences of real numbers, and $C_n$ will be a sequence of 
compact
sets. All will be constructed in parallel. Let $C_1 = C$, and $f_1=\phi$. Suppose $C_n$ and $f_n$ are given. 
%Let $(a_{n,i})_{i\in\N}$ be a sequence of all the left endpoints of intervals in $[0,1]\setminus C_n$. Let $b_{n,i}$ be the corresponding sequence of right endpoints. 
For each $n$ let $(a_{n,i},b_{n,i})_{i\in\N}$ be an enumeration of the maximal
intervals in $[0,1]\setminus C_n$.
It is not hard to check that we can also guarantee that \begin{equation}\label{nintcon}(a_{n,i_1},b_{n,i_1})\subset (a_{n-1,i_2},b_{n-1,i_2})\implies i_1\geq i_2.\end{equation} Let $C_{n+1} = C_n \cup \left(\cupl{i\in\N}{}  (b_{n,i}-a_{n,i})\cdot C+a_{n,i}\right)$. We define
$$f_{n+1}(x) = f_n(x)+\suml{i\in\N}{}\inv{2^{n+i+2}}\cdot \phi\left(\frac{x-a_{n,i}}{b_{n,i}-a_{n,i}}\right).$$
That is, we add a smaller copy of $\phi$ to each constant interval in $f_{n+1}$.
%(August 2018 James) Added above sentence explaining what we did to get f_{n+1}
Let $f(x) = \lim\limits_{n\to\infty}f_n(x)$. As this is the uniform limit of continuous functions, it must also be continuous. 
Clearly, $f$ is non-decreasing.
Note that, by construction, for any $0\le x<y\le 1$ there exist $n$ and $i$ such that $x<a_{n,i}<b_{n,i}<y$, which implies 
%It is easy to see 
that $f$ is strictly increasing.
Let $F(x)=\intd{0}{x}{f(t)}{t}$. 
Then $F'=f$, so $F$ is a strictly convex $\C^1$ function.
Let $C^* = \cupl{n\in\N}{} C_n$.

Now, we construct $E$, the set of lines which covers the graph of $F$. For each $x_0\in [0,1]$, there are two possibilities. If $x_0\in C^*$, add in the vertical line $x=x_0$ to the set $E$. If $x_0\not\in C^*$, add in the tangent line $f(x_0)(x-x_0)+F(x_0)$ to the set $E$. We claim that 
%the set of slopes of tangent lines in this construction has Hausdorff dimension zero. By lemma~\ref{lslope}, it suffices to show that the set of slopes of tangent lines has dimension zero, that is, $\dim f([0,1]\setminus C^*) = 0$. Let $\ep_1>0$.  
$\dim f([0,1]\setminus C^*)=0$.
By (\ref{nintcon}), for every $n,i\in\N$ we have that 
$$f(b_{n,i})-f(a_{n,i}) \le \suml{m\ge n}{}\ \ \suml{j:(a_{m,j},b_{m,j})\subset (a_{n,i},b_{n,i})}{}\inv{2^{m+j+2}} \le \suml{m\ge n}{}\suml{j\ge i}{}\inv{2^{m+j+2}} \le  \inv{2^{n+i}}.$$
Thus for any given $n$ we can cover the set 
$f([0,1]\setminus C^*)$
using $[f(a_{n,i}),f(a_{n,i})+\inv{2^{n+i}}]$ for all $i\in\N$. 
%Thus we have that 
Hence for any $d>0$ we have
$$H^d\big(f([0,1]\setminus C^*)\big) \leq \lim_{n\to\infty}\suml{i\in\N}{}\inv{\kein} = \lim_{n\to\infty}\inv{\ken}\suml{i\in\N}{}\inv{2^{di}}=0.$$  Thus, we obtained $\dim f([0,1]\setminus C^*) = 0$. 
Note that the set of slopes of the tangent lines in $E$ is $f([0,1]\setminus C^*)$. Therefore,
by Lemma~\ref{lslope}, the Hausdorff dimension of the union of the set of tangent lines is $1$. The union of the set of non-tangent lines, being $C^*\times \R$, also has dimension $1$
%Tamas modified this on 20 August, 2018:
since $\dim C^*=0$, so we can conclude that $\dim (\bigcup E) = 1$.
%until this
\end{proof}%C and union of rational translates of C are NOT interchangeable.
%----------------------------------------------------------------------------------------------
The following results show that the constructed function in Theorem~\ref{TCantc} cannot be very smooth. 
%(2/26/2017) This paragraph is added by Eric and Keleti.

%----------------------------------------------------------------------------------------------
\begin{theorem}\label{TC2}
Let $F$ be a differentiable real-valued Lipschitz function on 
%an interval
$I=[0,1]$
with a derivative which maps sets of 
%positive Lebesgue outer measure 
full Lebesgue outer measure in $I$
to sets of Hausdorff dimension one. Then the union of any set of lines which covers the graph of $F$ has Hausdorff dimension $2$.
\end{theorem}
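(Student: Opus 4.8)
The plan is to argue by contradiction: assume $\dim\bigcup E<2$ and derive a contradiction with the hypothesis on $f'$. The guiding idea is to classify each point of the graph according to whether it is covered by its tangent line or by some non-tangent line, to show via Theorem~\ref{t:venieri} that the non-tangent lines can only account for a Lebesgue-null set of base points, and then to feed the remaining full-measure set of tangentially covered points into the hypothesis through Lemma~\ref{MR}.

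First I would set $T=\{x\in I:\text{the tangent line to }f\text{ at }x\text{ belongs to }E\}$ and $A_{nt}=I\setminus T$. For every $x\in A_{nt}$ the tangent line at $x$ is not in $E$, yet $(x,f(x))$ is covered, so $E$ contains a line through $(x,f(x))$ that is not the tangent line (a non-vertical secant, or a vertical line; in either case a line not contained in the tangent hyperplane). Thus $A:=\{(x,f(x)):x\in A_{nt}\}\subseteq\graph(f)$ satisfies the hypotheses of Venieri's Theorem~\ref{t:venieri} with $n=2$ and $B=\bigcup E$. Since $x\mapsto(x,f(x))$ is bi-Lipschitz, $\dim A=\dim A_{nt}\le 1$; were $\dim A_{nt}=1$, Theorem~\ref{t:venieri} would force $\dim\bigcup E\ge\frac{n+2}{2}=2$, contrary to assumption. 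Hence $\dim A_{nt}<1$, so $A_{nt}$ is Lebesgue-null in $I$ and $T$ has full measure.

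It then remains to exploit the tangent lines. For $x\in T$ the tangent line at $x$ lies in $E$ and has finite slope $f'(x)$, so $f'(T)$ is contained in the set $M$ of slopes of the lines of $E$. Because $T$ has full measure in $I$, the hypothesis on $f'$ gives $\dim f'(T)=1$, whence $\dim M\ge 1$; Lemma~\ref{MR} then yields $\dim\bigcup E=2$, the desired contradiction. A minor technical point here is that Theorem~\ref{t:venieri} is stated for functions on all of $\R^{n-1}$, so I would first extend $f$ to a differentiable Lipschitz function on $\R$ (extending linearly beyond the endpoints of $I$ using the one-sided derivatives there, which are finite since $f$ is differentiable and Lipschitz).

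The genuinely hard input is the non-tangent step, and that is precisely where Theorem~\ref{t:venieri} does the heavy lifting: it encodes the fact that secant lines through a one-dimensional subset of a differentiable Lipschitz graph already force two-dimensional union. If one instead wished to prove directly that the secant-covered base points form a null set, the main obstacle would be the following. For each fixed slope $m$ the set of transversal intersection points of the slope-$m$ lines of $E$ with the graph is Lebesgue-null, which one can extract from the Banach indicatrix (area) formula, using that $f$ is Lipschitz hence absolutely continuous and that the intercept set at slope $m$ must be null once $\dim\bigcup E<2$ (otherwise the parallel slope-$m$ lines already give a two-dimensional union). However, the slopes range over an uncountable set, and an uncountable union of null sets need not be null; the per-slope nullity does not obviously globalize, since nearby slopes carry unrelated intercept sets. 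Taming this uncountable union is exactly the difficulty that Theorem~\ref{t:venieri} resolves, which is why I would invoke it rather than attempt the direct estimate.
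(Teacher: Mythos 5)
Your proposal is correct and follows essentially the same route as the paper: partition the base points into those covered by their tangent line and those covered only by non-tangent lines, apply Venieri's Theorem~\ref{t:venieri} to the latter set and Lemma~\ref{MR} together with the hypothesis on $f'$ to the former. The only immaterial differences are that you run the dichotomy on the Hausdorff dimension of the non-tangentially covered set rather than on its Lebesgue outer measure (the paper's Case 2 assumes positive outer measure, which forces dimension one; your version invokes Venieri even for null sets of dimension one) and that you frame the argument as a contradiction where the paper argues directly by cases.
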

\begin{proof}
Let $T$ be the set of $x$-coordinates of the points on the graph of $F$ covered by tangent lines. Let $U$ be the set of $x$-coordinates of the points covered by only non-tangent lines. There are two cases. 

Case 1: Suppose %$U$ has Lebesgue measure zero, so 
$T$ has full Lebesgue outer measure in $I$. 
Let $M$ be the set of slopes of the tangent lines
at the points of $T$.
By definition, we have that $M$ is $F'(T)$. Since $F'$ maps sets of 
%positive outer
full outer
measure to sets of Hausdorff dimension $1$, the set of slopes of the lines has Hausdorff dimension $1$. By Lemma~\ref{MR}, we thus have that the union of lines has Hausdorff dimension two.

Case $2$: Suppose 
%$U$ has positive outer measure.
that $T$ does not have full outer measure in $I$.
Then $U$ contains a Borel set $U'$ with positive Lebesgue measure.
Let $A = \graph F\!\restriction_{U'}$.  
%Because $\dim U=1$, the set $A$ must have Hausdorff dimension at least one. 
Then $A$ has positive $1$-dimensional Hausdorff measure.
Since $A$ is a subset of the graph of a Lipschitz function,
its Hausdorff dimension clearly cannot exceed 1, so we have $\dim A=1$.
In this case, Theorem~\ref{t:venieri} tells us that the Hausdorff dimension of the union of the set of lines is $(2+2)/2 = 2$.
\end{proof}

Recall that a function is said to have the \emph{Luzin N property}
if it maps sets of Lebesgue measure zero to sets of Lebegue measure zero.

\begin{cor}\label{c:Luzin}
Let $F$ be a non-linear differentiable real-valued Lipschitz function on 
%an interval 
$I=[0,1]$ such that $F'$ has the Luzin N property.
Then the union of any set of lines which covers the graph of $F$ has Hausdorff dimension $2$.
\end{cor}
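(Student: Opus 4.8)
The plan is to deduce the corollary from Theorem~\ref{TC2}: I would first check that a non-linear differentiable $f$ whose derivative has the Luzin N property automatically has the property ``$f'$ maps full measure sets to sets of Hausdorff dimension one,'' and then remove the Lipschitz hypothesis by localization. For the derivative condition, note that since $f$ is non-linear, $f'$ is non-constant, and being a derivative it has the Darboux (intermediate value) property, so $f'(I)$ is a non-degenerate interval and hence has positive Lebesgue measure. Now let $S\su I$ have full Lebesgue measure. Then $I\sms S$ is null, so the Luzin N property gives that $f'(I\sms S)$ is null. Since $f'(S)\cup f'(I\sms S)=f'(I)$ has positive measure, $f'(S)$ must have positive Lebesgue measure, and therefore Hausdorff dimension exactly $1$. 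Thus $f'$ sends every full measure subset of $I$ to a set of dimension $1$, which is precisely the hypothesis of Theorem~\ref{TC2} other than Lipschitzness.

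It then remains to address the fact that Theorem~\ref{TC2} also assumes $f$ Lipschitz. I would pass to a subinterval. Since $f'(x)=\lim_{n\to\infty} n\big(f(x+1/n)-f(x)\big)$ is a pointwise limit of continuous functions, $f'$ is of Baire class one and hence has a point of continuity $x_0$ in the interior of $I$; near $x_0$ the derivative is bounded, so by the mean value theorem $f$ is Lipschitz on a closed neighborhood $I'=[x_0-\delta,x_0+\delta]$. The restriction $f'\!\restriction_{I'}$ still has the Luzin N property, and \emph{provided it is non-constant} the first paragraph applied on $I'$ shows that $f\!\restriction_{I'}$ satisfies every hypothesis of Theorem~\ref{TC2}. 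Hence the union of any family of lines covering $\graph(f\!\restriction_{I'})$ has dimension $2$; since any cover $E$ of $\graph(f)$ restricts to such a cover with union contained in $\bigcup E$, this would give $\dim\bigcup E=2$.

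The delicate point — and the step I expect to be the main obstacle — is guaranteeing a subinterval on which $f$ is Lipschitz \emph{and} $f'$ is simultaneously non-constant, since a priori $f'$ could be locally constant at every continuity point while being non-constant only on a nowhere dense set where it is unbounded. To handle this robustly I would not insist on an interval: using differentiability, I would decompose $I=N\cup\bigcup_k A_k$ with $N$ null and $f\!\restriction_{A_k}$ Lipschitz (taking $A_k$ to be pieces of small diameter on which the difference quotients are bounded), and then rerun the two-case argument of Theorem~\ref{TC2}. In Case~1, where the set $U$ of points covered only by non-tangent lines is null, the slopes contain $f'(T)$ with $T$ of full measure, which has dimension $1$ by the first paragraph, so Lemma~\ref{MR} finishes. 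In Case~2, where $U$ has positive measure, some $U\cap A_k$ has positive measure; extending $f\!\restriction_{U\cap A_k}$ to a Lipschitz function $g$ and passing to the density points where the approximate derivative of $g$ equals $f'$, the covering lines stay non-tangent to $g$, so the Lipschitz (approximate tangent hyperplane) version of Venieri's theorem noted after Theorem~\ref{t:venieri} applies and forces the union of lines to have dimension $2$.
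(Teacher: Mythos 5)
Your proposal is correct, and its first paragraph is, almost verbatim, the paper's entire proof: the paper likewise reduces to Theorem~\ref{TC2} and verifies its derivative hypothesis by exactly your Darboux/Luzin-N argument ($f'(I\setminus S)$ is null, $f'(I)$ is a non-degenerate interval, hence $f'(S)$ has positive measure and dimension $1$). The divergence is that the paper stops there: it invokes Theorem~\ref{TC2} without addressing that theorem's Lipschitz hypothesis, even though the corollary assumes only differentiability. For instance, $f(x)=x^{2}\sin(x^{-2})$ (with $f(0)=0$) is non-linear and differentiable, its derivative has the Luzin N property, yet $f$ is not Lipschitz near $0$, so the corollary genuinely covers functions to which Theorem~\ref{TC2} as stated does not apply. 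In other words, the difficulty you devote your last two paragraphs to is a real gap in the paper's own deduction, not an artifact of your approach, and your repair is sound: Case~1 of the proof of Theorem~\ref{TC2} uses only Lemma~\ref{MR} and never uses Lipschitzness, while in Case~2 your countable decomposition into Lipschitz pieces, McShane extension of $f\!\restriction_{U\cap A_k}$, and passage to outer density points, where any approximate derivative of the extension must agree with $f'$ so that the covering lines remain non-tangent to the extension, put you legitimately in the setting of the approximate-tangent, Lipschitz version of Venieri's theorem that the paper records informally after Theorem~\ref{t:venieri}. Your caution about the naive interval localization in your second paragraph is also warranted: a priori $f'$ could be locally constant wherever it is locally bounded, and the Luzin N property rules this scenario out only when the exceptional closed set is null, so the set-theoretic rather than interval decomposition is the robust choice. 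In sum, your argument subsumes the paper's and proves the corollary as stated, whereas the paper's two-line deduction is complete only under an additional Lipschitz assumption or after inserting precisely the kind of patch you supply.
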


\begin{proof}
By Theorem~\ref{TC2}, it is enough to check that for any full Lebesgue outer measure set
$A \subset I$, the set $F'(A)$ has Hausdorff dimension $1$. 
Let $f=F'$.
Let $H$ be an arbitrary Borel set such that $f(A)\subset H$.
Since $f$ is a derivative, it is a Borel function, so $f^{-1}(H)$ is a Borel set.
Using that $A$ has full Lebesgue outer measure in $I$ and $A\subset f^{-1}(H)\subset I$, we obtain that $f^{-1}(H)$ has full Lebesgue measure in $I$.
Therefore $f^{-1}(f(I)\setminus H)=I\setminus f^{-1}(H)$ has Lebesgue measure zero.
Since $f=F'$ satisfies Luzin N property, we have 
%$F'(I\setminus A)$ 
that $f(I)\setminus H$
has Lebesgue measure zero. 
Notice that $F$ is non-linear and $f=F'$ is non-constant, so Darboux's theorem
implies that $f(I)$ is a non-degenerate interval. 
Therefore $H$ has positive Lebesgue measure.
%and so it
Since $H$ was an arbitrary Borel set that contains $F'(A)$, this implies that $F'(A)$
has Hausdorff dimension $1$.
\end{proof}

%edited by (Eric)%
Since differentiable functions satisfy the \emph{Luzin N property} and twice-differentiable functions are locally Lipschitz, we get the following.

\begin{cor}\label{c:C2}
Let $F$ be a non-linear twice-differentiable real-valued function on $[0,1]$. 
%an interval. 
The union of any set of lines which covers the graph of $F$ has Hausdorff dimension $2$.
\end{cor}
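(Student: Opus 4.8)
The plan is to obtain this as an immediate consequence of Corollary~\ref{c:Luzin}. The only additional hypothesis appearing there is that the derivative $f'$ enjoys the Luzin N property, so the entire task reduces to verifying that single hypothesis under the stronger assumption that $f$ is twice differentiable.

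First I would observe that twice differentiability of $f$ says precisely that $g := f'$ is itself a differentiable function on the interval. I then invoke the classical fact recalled just before the corollary: every everywhere-differentiable real function maps Lebesgue-null sets to Lebesgue-null sets, that is, it satisfies the Luzin N property. Applying this fact to $g = f'$ shows that $f'$ has the Luzin N property. Since $f$ is also assumed non-linear and is (once) differentiable, all the hypotheses of Corollary~\ref{c:Luzin} are now met, and its conclusion --- that the union of any family of lines covering $\graph(f)$ has Hausdorff dimension $2$ --- transfers verbatim.

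The argument has essentially no obstacle: the one nontrivial input is the theorem that everywhere-differentiable functions satisfy the Luzin N property, and this is already taken as known in the text. The single point to keep straight is that this theorem must be applied to the derivative $f'$ rather than to $f$ itself; it is exactly here that twice differentiability is used, guaranteeing that $f'$ is differentiable \emph{everywhere}. This is essential, since the Luzin N property can fail for functions that are only differentiable almost everywhere --- indeed Theorem~\ref{Tbinc} produces such a derivative whose range is zero-dimensional, which is what allows the dimension to drop to $1$ in that setting.
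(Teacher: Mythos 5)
Your proof is correct and is exactly the paper's argument: the paper also deduces Corollary~\ref{c:C2} from Corollary~\ref{c:Luzin} by noting that twice differentiability makes $f'$ everywhere differentiable, hence (by the classical theorem) $f'$ has the Luzin N property. Your additional remark that the fact must be applied to $f'$ rather than $f$, and that everywhere-differentiability is essential, is a nice clarification but matches the paper's intent.
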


%-----------------------------------------------------------------

We can extend this result to curves in the plane, and even in higher dimension.
%(2/26/2017) This paragraph is added by Eric and Keleti.
Recall that a parametrized differentiable curve is called \emph{regular} if its derivative is nowhere zero.

%-----------------------------------------------------------------
\begin{cor}\label{c:curve}
Let $n\ge 2$, $I=[0,1]$  and let $\gamma:I\rightarrow \R^n$ be a twice-differentiable regular curve  
%Let $\gamma$ be a twice-differentiable regular curve in $\R^n$ 
%with nowhere vanishing derivative 
such that $\gamma(I)$ is not a subset of a line. Then, the union of any set of lines which covers $\gamma(I)$ has Hausdorff dimension at least $2$.
\end{cor}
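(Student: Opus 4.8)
The plan is to reduce the higher-dimensional statement to the planar case already handled by Corollary~\ref{c:C2}, via projection. The key observation is that Hausdorff dimension does not increase under Lipschitz maps, and orthogonal projections $\pi:\R^n\to\R^2$ are Lipschitz (in fact $1$-Lipschitz). So if $E$ is any set of lines covering $\gamma$, then $\pi(\bigcup E)$ contains $\pi(\gamma)$, and I would like to arrange that $\pi(\bigcup E)$ is itself covered by a family of lines covering the planar curve $\pi\circ\gamma$ in a way that lets me invoke the planar result. Since $\dim\bigcup E\ge\dim\pi(\bigcup E)$, a lower bound of $2$ on the projection transfers upward.

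**Choosing a good projection.**

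First I would reduce to showing that for \emph{some} linear projection $\pi:\R^n\to\R^2$, the image $\pi\circ\gamma$ is a non-linear twice differentiable regular planar curve, and that $\pi$ sends lines to lines (or points). A linear projection automatically maps lines to lines or points, and maps the line through $x$ in direction $v$ to the line (or point) through $\pi(x)$ in direction $\pi(v)$. The curve $\gamma$ is twice differentiable and regular and not contained in a line, so its tangent directions $\gamma'(t)$ are not all parallel; hence $\gamma$ genuinely spans at least a $2$-plane. I would pick coordinates so that $\pi$ is projection onto two coordinates for which the composed curve $\pi\circ\gamma$ remains regular (i.e.\ $(\pi\circ\gamma)'=\pi(\gamma')$ is nonvanishing) and is not a straight line segment. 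The main point to verify is that such a projection exists: since $\gamma$ is regular and not linear, a generic $2$-dimensional coordinate projection keeps it regular and non-linear, so a suitable choice of two coordinates works (after possibly restricting to a subinterval where the relevant coordinate derivative stays nonzero, which suffices since dimension is unaffected by restriction to a subcurve).

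**Transferring the covering and concluding.**

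Once $\pi$ is fixed, write $\tilde\gamma=\pi\circ\gamma$ and let $\tilde E=\{\pi(l):l\in E, \ \pi(l)\text{ is a line}\}$. For each point $p\in\tilde\gamma$, choose $x\in\gamma$ with $\pi(x)=p$; the line $l\in E$ through $x$ projects either to a line through $p$ or to the single point $p$. Discarding lines that collapse to points changes nothing about covering the positive-dimensional part of $\tilde\gamma$. After re-parametrizing $\tilde\gamma$ as a graph of a twice differentiable non-linear function over one coordinate (possible on a subinterval where $\tilde\gamma$ is regular, as above), Corollary~\ref{c:C2} gives $\dim\bigl(\bigcup\tilde E\bigr)=2$. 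Since $\bigcup\tilde E\subseteq\pi\bigl(\bigcup E\bigr)$ and $\pi$ is Lipschitz, $\dim\bigl(\bigcup E\bigr)\ge\dim\pi\bigl(\bigcup E\bigr)\ge 2$.

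**The main obstacle.**

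I expect the genuine difficulty to be the geometric bookkeeping around the projection: ensuring that $\tilde\gamma$ is a \emph{non-linear} regular $\C^2$ curve and can be written as a graph of a twice differentiable function so that Corollary~\ref{c:C2} literally applies. One must rule out the degenerate possibility that every convenient coordinate projection flattens $\gamma$ onto a line, and one must pass to a subinterval where the graph representation is valid; both are routine but need care since $\gamma$ is only assumed twice differentiable (not $\C^2$ in the strong sense) and regular. Handling the collapsing lines (those $l\in E$ with $\pi(l)$ a point) is harmless because they cover at most the projection of a single point's fiber and do not affect the positive-measure set of tangent points on which the planar lower bound rests.
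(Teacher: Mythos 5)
Your overall route is the same as the paper's: project, reduce to the planar case (Corollary~\ref{c:C2}), and use that Lipschitz maps do not increase Hausdorff dimension. The differences are in execution. The paper inducts on $n$, projecting away one direction at a time: since $t\mapsto\gamma'(t)/|\gamma'(t)|$ is locally Lipschitz (this is where twice differentiability enters), its image has Hausdorff dimension at most $1<n-1=\dim S^{n-1}$, so there is a direction $\vec{v}$ parallel to no tangent vector, and projecting along $\vec{v}$ keeps the curve regular on all of $[0,1]$. You instead project directly onto a coordinate plane and restrict to a subinterval to retain regularity and non-linearity; this part can be made rigorous (pick $t_0$ at which the unit tangent is not locally constant, a coordinate $i$ with $\gamma_i'\neq 0$ near $t_0$, and then some $j$ for which $(\gamma_i,\gamma_j)$ is non-linear there), so this difference is essentially cosmetic.

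The genuine gap is your treatment of the collapsing lines: the reason you give for dismissing them is false. Each $l\in E$ with direction in $\ker\pi$ contributes only a single point to $\pi\bigl(\bigcup E\bigr)$, but there may be continuum many such lines, and they may be the \emph{only} lines of $E$ covering a positive-measure portion of the curve --- indeed all of it: if $E$ consists of the lines through $\gamma(t)$, $t\in[0,1]$, all parallel to one fixed vector $u\in\ker\pi$, then $\tilde E=\emptyset$ and Corollary~\ref{c:C2} has nothing to apply to. (The conclusion still holds in that example, but by a different argument: a further projection maps the union onto a set of the form $\pi(\gamma)\times\R$, which has dimension $2$.) A repair along your lines: either the set of parameters covered only by lines with direction in $\ker\pi$ projects to a set of dimension at least $1$, in which case the union of those lines alone has dimension at least $2$ by a slicing/product lower bound, or it projects to a set of length zero, in which case $\tilde E$ covers the projected graph outside a Lebesgue-null set of $x$-coordinates, and the proof of Theorem~\ref{TC2} goes through verbatim for such almost-everywhere covers. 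You should know that the paper's own proof is silent about exactly the same issue; but its one-direction-at-a-time induction is easier to patch, because there the collapsing lines form a single parallel family (all but countably many choices of $\vec{v}$ make the corresponding parameter set Lebesgue-null), whereas your direct projection to $\R^2$ has an $(n-2)$-dimensional kernel, so for $n\geq 4$ the collapsing family can be genuinely large and that countability trick fails.
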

\begin{proof}
We prove by induction. For $n=2$, this follows from Corollary~\ref{c:C2}. Suppose that 
$n\ge 3$ and 
the result holds for $n-1$. 
The goal is to project to an $n-1$-dimensional subspace and to apply the induction hypothesis.

First we prove the existence of a 
(non-degenerate) interval $J$ and non-parallel unit vectors $v, v'\in S^{n-1}$ such that 
$v$ is not parallel to any value of $\gamma'$ on $J$, $\proj_v \gamma(J)$ is not a subset of a line,
where $\proj_v$ is the orthogonal projection to the complementary subspace of $v$,
and for any two distinct $t_1,t_2\in J$ the points $\gamma(t_1)$ and $\gamma(t_2)$ are distinct, and the angles between the vectors $\gamma(t_2)-\gamma(t_1)$ and $v$, and between $\gamma(t_2)-\gamma(t_1)$ and $v'$ are in between $\pi/4$ and $3\pi/4$.
If 
%$\gamma(I)$ 
there exists an interval $J\subset I$ such that $\gamma(J)$ is not a subset of a line but it
is contained in an $n-1$-dimensional affine subspace then the normal
vector of this affine subspace clearly works
as $v$ and any vector with angle $\pi/5$ with $v$ works as $v'$.
Otherwise, 
%any projection of $\gamma(I)$ to an $n-1$ dimensional subspace will not be contained in a line, so it is enough to find a unit vector $v$ which is not parallel to any value of $\gamma'$.
%Since $\gamma$ is a twice-differentiable regular curve, we have that $\gamma'/|\gamma'|$ is differentiable. 
%Then $\gamma'/|\gamma'|$ and $-\gamma'/|\gamma'|$ are locally Lipschitz, and their images has Hausdorff dimension $1$.
%, which is less than $n-1$. 
%Since the set of unit vectors is $S^{n-1}$, which has Hausdorff dimension $n-1>1$, this implies that
%indeed there exists a vector $v\in S^{n-1}$ which is not parallel to any value of $\gamma'(t)$.
using that $\gamma'/|\gamma'|$ is continuous but not constant, 
we can choose an interval $J$ and a $u\in S^{n-1}$ such that the angle between $u$ and $\gamma'(t)$ is less than $\pi/5$ for every $t\in J$.
Then any non-parallel $v, v'\in S^{n-1}$ that are orthogonal to $u$ work.

Note that since $\gamma$ is a twice differentiable regular curve and $v$ is not parallel to any value of $\gamma'$ on $J$, the projection $\proj_v \gamma$ is
a twice differentiable regular curve on $J$.
If none of the given lines that cover $\gamma(J)$
are parallel to $v$ then projecting the covering lines to the complementary space of $v$, the induction hypothesis implies that the projection of the union of the covering lines has Hausdorff dimension at least $2$, which then implies that union of the original covering lines has also Hausdorff dimension at least $2$.

Therefore, to complete the proof it is enough to show that without loss of generality we can suppose that none of the covering lines of $\gamma(J)$ are parallel to $v$.
Since 
%$\gamma'/|\gamma'|:I\to S^{n-1}$
%is continuous and $I$ is compact, the ranges of 
%$\gamma'/|\gamma'|$ and $-\gamma'/|\gamma'|$ are also compact, so 
the direction between any two distinct points of $\gamma(J)$ is separated from $v$
and $v'$, the projections $\proj_v$ and $\proj_{v'}$
%Thus for any vector $u\in S^{n-1}$ sufficiently close to $v$ the projection $\proj_u$ 
to the complementary spaces of $v$ and $v'$ are Lipschitz on $\gamma(J)$.
Note that this implies that for 
%any such $u$ 
$u=v$ or $v'$
and any subset $E\subset \gamma(J)$ if $L$ is the union of lines parallel to $u$ through the points of $E$ then $\dim L=\dim (\proj_u E) + 1=\dim E + 1$ (where $\dim$ denotes Hausdorff dimension).
Let $E$ be the set of those points $p$ of $\gamma(J)$ that are covered by lines $\ell_p$ parallel to $v$. 
If $E$ has Hausdorff dimension at least $1$
then the union the lines $\ell_p$ through the points of $E$ already has Hausdorff dimension at least $2$ so we are done. 
Otherwise, 
%we fix a $v'\in S^{n-1}\setminus\{v\}$ sufficiently close to $v$ and 
replace each line $\ell_p$ through $p\in E$ by the line $\ell'_p$ through $p$ parallel to $v'$. 
Since $E$ has Hausdorff dimension less than $1$
the union of the lines $\ell'_p$ has Hausdorff dimension less than $2$, so by replacing each line $\ell_p$ by $\ell'_p$ we can indeed suppose that $E$ is empty, which completes the proof.
\end{proof}

\begin{remark}
In Corollary~\ref{c:curve} when $n>2$ we cannot claim that the union of the lines has Hausdorff dimension exactly $2$ even if every point of the curve is covered only by one line: let 
$\gamma:[a,b] \to\R^n$ be an arbitrary curve that is contained in a two-dimensional plane $P\su\R^n$,
let $Q\su\R^n$ be a cube disjoint to $P$
and let $g:[a,b]\to Q$ be a continuous onto map (an $n$-dimensional Peano curve). 
Then if for every $t\in[a,b]$ we take the line through $\gamma(t)$ and $g(t)$ then every point of the
curve $\gamma$ is covered exactly once but the union of the lines covers $Q$, so it has Hausdorff dimension $n$. 
\end{remark}

\begin{remark}
Note that in Corollary~\ref{c:curve} the assumption that the curve is not contained in a line guarantees that no covering line can be contained in the curve, so here
we consider variant 2 of Problem~\ref{p:general}.
\end{remark}

\begin{remark}
In Theorems~\ref{TC1} and \ref{TC2} and Corollaries~\ref{c:Luzin}, \ref{c:C2} and \ref{c:curve} if instead
of full lines we take only an arbitrary $1$ Hausdorff dimensional 
subset of each line then the union of these sets must also have  
Hausdorff dimension $2$ (at least $2$ in Corollary~\ref{c:curve}).
This follows immeditely from the result of H\'era, M\'ath\'e and
the first author \cite{HKM}, which states that if we have any collection
of lines in $\R^n$ such that the union of the lines has 
Hausdorff dimension at most $2$ and we take a $1$ Hausdorff dimensional
subset of each line then their union has the same Hausdorff dimension
as the union of the full lines.

Another way to get these slightly stronger results is the following.
All of the above-mentioned results are based on Lemma~\ref{MR}.
If we state the stronger version of this lemma in which we take
only the union of $1$ Hausdorff dimensional subsets of the lines, 
which is still a special case of the result of Molter and Rela~\cite{Furstenberg}, then our arguments give the above-mentioned stronger results.
\end{remark}

\section*{Acknowledgements}
We acknowledge the two anonymous referees for their constructive criticisms, which considerably improved the quality of the manuscript.
We are also grateful to Hanwen Liu for pointing out a mistake in an earlier version of this paper and for giving an idea how to fix that.

%-------------------------------------------------------------------------------------------

\end{document}